\numberwithin{equation}{section}
\newtheorem{theorem}{Theorem}[section]
\newtheorem{lemma}[theorem]{Lemma}
\newtheorem{proposition}[theorem]{Proposition}
\newtheorem{corollary}[theorem]{Corollary}
\newtheorem{conjecture}[theorem]{Conjecture}
\theoremstyle{definition}
\newtheorem{def-prop}[theorem]{Definition-Proposition}
\newtheorem{remark}[theorem]{Remark}
\newtheorem{example}[theorem]{Example}
\newtheorem*{acknowledgement}{Acknowledgement}
\newtheorem{question}[theorem]{Question}
\DeclareMathOperator{\depth}{depth}
\DeclareMathOperator{\Ass}{Ass}
\DeclareMathOperator{\Min}{Min}
\DeclareMathOperator{\pd}{pd}
\newcommand{\ZZ}{{\mathbb Z}}
\newcommand{\NN}{{\mathbb N}}
\newcommand{\QQ}{{\mathbb Q}}
\def\mm{{\mathfrak m}}
\def\pp{{\mathfrak p}}
\def\qq{{\mathfrak q}}
\begin{document}

\title{Depth functions of powers of homogeneous ideals}

\author{Huy T\`ai H\`a}
\address{Tulane University \\ Department of Mathematics \\
6823 St. Charles Ave. \\ New Orleans, LA 70118, USA}
\email{tha@tulane.edu}

\author{Hop Dang Nguyen}
\address{Institute of Mathematics \\ Vietnam Academy of Science and Technology, 18 Hoang Quoc Viet \\ Hanoi, Vietnam}
\email{ngdhop@gmail.com}

\author{Ngo Viet Trung}
\address{International Centre for Research and Postgraduate Training, Institute of Mathematics \\ Vietnam Academy of Science and Technology, 18 Hoang Quoc Viet \\ Hanoi, Vietnam}
\email{nvtrung@math.ac.vn}

\author{Tran Nam Trung}
\address{Institute of Mathematics\\ Vietnam Academy of Science and Technology, 18 Hoang Quoc Viet, and TIMAS, Thang Long University, Nghiem Xuan Yem road, Hanoi, Vietnam}
\email{tntrung@math.ac.vn}

\keywords{depth, projective dimension, associated prime, monomial ideals}
\subjclass[2010]{Primary 13C15, 13D02, 14B05}
		
\begin{abstract}
We settle a conjecture of Herzog and Hibi, which states that the function $\depth S/Q^n$, $n \ge 1$, where $Q$ is a homogeneous ideal in a polynomial ring $S$, can be any convergent numerical function. 
We also give a positive answer to a long-standing open question of Ratliff on the associated primes of powers of ideals.
\end{abstract}
\maketitle


\section{Introduction}

Let $S$ be a standard graded algebra over a field $k$. 
For a homogeneous ideal $Q \subseteq S$, we call the function $\depth S/Q^n$, $n \ge 1$ 
the \emph{depth function} of $Q$.
The goal of this paper is to prove the following conjecture of Herzog and Hibi in \cite{HH} (see also \cite[Problem 3.10]{He}).
\medskip

\begin{conjecture}[Herzog-Hibi] \label{conj.HH}
Let $f: \NN \rightarrow \ZZ_{\ge 0}$ be any function such that $f(n) = f(n+1)$ for all $n \gg 0$. Then there exists a homogeneous ideal $Q$ in a polynomial ring $S$ such that $f$ is the depth function of $Q$.
\end{conjecture}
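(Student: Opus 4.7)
The plan is to reduce the conjecture to constructing a family of ``building block'' ideals with tightly controlled depth functions and then assembling them via sums of ideals in disjoint sets of variables. Note that by Brodmann's classical result the depth function of any ideal is eventually constant, so only the converse direction is nontrivial; given an eventually constant $f$, one must exhibit a $Q$ realizing it.

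The principal technical tool I would use is an additive formula for the depth of powers of a sum of ideals in disjoint variables. Namely, if $I \subseteq A$ and $J \subseteq B$ are homogeneous ideals in polynomial rings in disjoint variable sets over $k$, and $S = A \otimes_k B$, then one should establish a formula of the form
\[
\depth S/(I+J)^n \;=\; \min_{\substack{i+j=n\\ i,j \ge 1}} \bigl\{\depth A/I^i + \depth B/J^{j} + 1\bigr\}
\]
(modulo boundary terms for $i=0$ or $j=0$). Such a formula can be proved via the Künneth theorem together with the filtration $0 \to I^i J^{n-i}/I^{i+1}J^{n-i} \to \dots$ relating $(I+J)^n$ to tensor products of powers of $I$ and $J$; alternatively it can be cited from prior work on the Tor-regularity and depth of sums. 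The upshot is that sums in disjoint variables let me add depth functions (with a small additive shift and a $\min$ in place of pointwise sum), which is exactly the operation needed to assemble arbitrary step functions from simpler pieces.

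Given this formula, the construction proceeds in two stages. First, for each integer $k \ge 1$ I would construct a ``pulse'' ideal $Q_k$ in some polynomial ring $S_k$ whose depth function equals a prescribed value $c_k$ at $n=k$ and stabilizes at a strictly larger value for all other $n$. Natural candidates are explicit monomial ideals, such as polarizations of $\mathfrak m$-primary monomial ideals or suitably crafted products of edge-type ideals, where the depth of $S_k/Q_k^n$ is computable combinatorially via Takayama's formula or a Koszul-type argument. Second, given a target eventually constant function $f$, I would let $k_1 < k_2 < \cdots < k_r$ be the finitely many values where $f$ differs from its eventual value, and form the sum $Q = \sum_i Q_{k_i}$ in disjoint variables, tuning each pulse so that after the $+1$ corrections in the min-formula the depth at each $n$ comes out to exactly $f(n)$. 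One may need to add extra ``padding'' polynomial variables to reach the desired absolute level for the stable value.

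The main obstacle is the construction of the building block ideals $Q_k$: one needs not just knowledge of the stable depth but of every single power, with a prescribed drop at a prescribed power. Ensuring that a candidate monomial ideal really has a depth that dips exactly at $n = k$ and nowhere else, rather than at several powers simultaneously, demands a careful combinatorial analysis, and this is where the technical heart of the proof lies. A secondary but nontrivial issue is verifying that the sum formula above holds exactly (not merely as an inequality) so that the individual pulses combine additively without unexpected cancellation; this will likely require a detailed study of the minimal free resolution of $(I+J)^n$ in terms of those of $I^i$ and $J^j$.
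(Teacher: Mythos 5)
Your plan rests on two steps that are each genuine gaps rather than verifications. First, the exact formula you posit for $\depth S/(I+J)^n$ is not something you can take off the shelf: the available results on powers of sums in disjoint variables (e.g.\ \cite{HTT}) give min-type \emph{bounds} whose right-hand side has a different and more involved shape, and your displayed version is already off for $I=(x)\subset k[x]$, $J=(y)\subset k[y]$, $n=2$, where $\depth k[x,y]/(x,y)^2=0$ while every term $\depth A/I^i+\depth B/J^j+1$ with $i,j\ge 1$ is at least $1$, so the unspecified ``boundary terms'' would have to carry the whole formula. Upgrading such bounds to an equality valid for every $n$ is a substantial theorem, not a check of minimal free resolutions. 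Second, and more fundamentally, even granting an exact formula, a min-plus convolution is not pointwise addition of depth functions, so ``tuning each pulse so that the depth at each $n$ comes out to exactly $f(n)$'' does not work: if $f_1$ dips below its stable value only at $i=k_1$ and $f_2$ only at $j=k_2$, then for every larger $n$ the splitting $i=k_1$ is still available, so the convolution drops permanently once $n$ passes the thresholds and shows a single deeper dip near $n=k_1+k_2$ rather than separate dips at $k_1$ and $k_2$; dips contaminate all subsequent powers and recombine at sums of indices, so realizing an arbitrary convergent target becomes a tropical factorization problem you do not address. This is essentially the route whose earlier implementation for non-increasing functions in \cite{HTT} is acknowledged in the introduction to have a gap. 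Finally, your ``pulse'' building blocks (depth smaller at exactly one power than at all others) are precisely the hardest objects in the whole problem---the Type II functions---and you give no construction of them; naming polarization or Takayama's formula is not a proof.

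The paper avoids both difficulties by not using $I+J$ at all. It takes the product $IJ$ in disjoint variables and cuts by the linear form $x-y$: since $IJ=I\cap J$ and $(IJ)^n=I^nJ^n$, one gets the exact pointwise identity $\depth S/Q^n=\depth A/I^n+\depth B/J^n$ for $Q=(IJ,x-y)/(x-y)$ (Proposition \ref{additivity}), so the sum of two depth functions is literally a depth function, with no min and no error terms. Every convergent function is then a finite sum of Type I and Type II functions (Lemma \ref{Types}); Type I is supplied by Herzog--Hibi's example \cite{HH} (Example \ref{Type I}), and Type II is obtained by multiplying a Type I ideal with an ideal whose depth function is $1,\dots,1,0,0,\dots$ (known from \cite{HTT,MST}) and cutting by a second linear form, which is justified by the new description of $\Ass(IJ,x-y)$ and the resulting non-zerodivisor argument (Propositions \ref{control} and \ref{reduction}, Example \ref{Type II}). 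To salvage your route you would need both an exact depth formula for all powers of $I+J$ and a solution to the assembly problem above; as written, the proposal defers exactly the two places where the difficulty lies.
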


For simplicity we call a function $f: \NN \rightarrow \ZZ_{\ge 0}$ a \emph{numerical function} 
and say that $f$ is \emph{convergent} if $f(n) = f(n+1)$ for all $n \gg 0$. By a classical result of Brodmann \cite{Br}, the depth function of a homogeneous ideal is always convergent. Conjecture \ref{conj.HH} simply says that this is the only constraint for numerical functions to be depth functions of homogeneous ideals. This conjecture is remarkable since the depth function tends to be non-increasing in known examples.   

Before this work, Conjecture \ref{conj.HH} has been verified only for non-decreasing functions \cite{HH} 
and for some special classes of non-increasing functions \cite{HTT, HH, MST}. 
Note that the proof of Conjecture \ref{conj.HH} for non-increasing functions in \cite{HTT} has a gap.   Examples of non-monotone depth functions were hard to find \cite{BHH, HS, HH, MV}.
However, Bandari, Herzog and Hibi \cite{BHH} showed that the depth function can have any given number of local maxima.  \par

Our main result, Theorem \ref{Herzog-Hibi}, settles Conjecture \ref{conj.HH} in its full generality. 
Furthermore, we shall show that the ideal $Q$ can be chosen to be a monomial ideal.
As a consequence, we give a positive answer to the following question of Ratliff, 
which has remained open since 1983 \cite[(8.9)]{Ra}.
\medskip

\begin{question}[Ratliff] \label{question.R}
Given a finite set $\Gamma$ of positive integers, do there exist a Noetherian
ring $S$, an ideal $Q$ and a prime ideal $P \supset Q$ in $S$ such that $P$ is an associated prime of $Q^n$ if and only if $n \in \Gamma$?
\end{question}

Inspired by Theorem \ref{Herzog-Hibi}, one may expect that for any convergent positive numerical function $f$, there exists a homogeneous ideal $Q$ such that $f$ is the depth function of symbolic powers of $Q$. This is verified recently by the second and the third authors of this paper \cite{NgT}.

The proof of Conjecture \ref{conj.HH} is based on our recent works on sums of ideals \cite{HNTT, HTT}. The key observation is the additivity of depth functions; that is, the sum of two depth functions is again a depth function. It can also be seen that any convergent numerical function which is not the constant zero function can be written as the sum of a finite number of functions of the following two types:
\begin{itemize}
	\item Type I: for some fixed $d \in \NN$, $f(n) = \left\{\begin{array}{ll} 0 & \text{if } n < d \\ 1 & \text{if } n \ge d. \end{array}\right.$
	\item Type II: for some fixed $d \in \NN$, $f(n) = \left\{\begin{array}{ll} 0 & \text{if } n \not= d \\ 1 & \text{if } n = d. \end{array}\right.$
\end{itemize}
Therefore, the proof is completed if we can construct ideals with depth functions of Types I and II.

Our paper is structured as follows. In Section 2 we prove the additivity of depth functions. Ideals with depth functions of Types I and II are constructed in Section 3. Section 4 is devoted to consequences of our solution to Conjecture \ref{conj.HH}.

We assume that the reader is familiar with basic properties of associated primes and depth, which we use without references. For unexplained notions and terminology, we refer the reader to \cite{BrH, E}.


\section{Additivity of depth functions} \label{sect_prel}

Throughout this section, let $A$ and $B$ be polynomial rings over a field $k$ with disjoint sets of variables, and let $R = A \otimes_k B$. Let $I \subseteq A$ and $J \subseteq B$ be nonzero proper homogeneous ideals. By abuse of notations, we shall also use $I$ and $J$ to denote their extensions in $R$. 

\begin{lemma}[\protect{\cite[Lemma 1.1]{HT}}] \label{HoaTam}
	$I \cap J = IJ$.
\end{lemma}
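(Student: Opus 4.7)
The inclusion $IJ\subseteq I\cap J$ is automatic, so the content is the reverse inclusion $I\cap J\subseteq IJ$. I would reformulate this via the standard identification
\[(I\cap J)/IJ \;\cong\; \Tor_1^R(R/I,R/J),\]
obtained from the long exact Tor sequence associated to $0\to I\to R\to R/I\to 0$ after applying $-\otimes_R R/J$. Thus the lemma is equivalent to the vanishing $\Tor_1^R(R/I,R/J)=0$.

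The key structural input is the tensor product decomposition $R=A\otimes_k B$, under which $R/I\cong (A/I)\otimes_k B$ and $R/J\cong A\otimes_k (B/J)$, since $I$ and $J$ are extended from the disjoint polynomial subrings. I would pick a free resolution $F_\bullet\to A/I$ over $A$. Because $B$ is free (hence flat) over $k$, the complex $F_\bullet\otimes_k B$ is a resolution of $R/I$ by free $R$-modules. Tensoring this resolution over $R$ with $R/J\cong A\otimes_k (B/J)$ collapses to
\[F_\bullet\otimes_k B\,\otimes_R\,(A\otimes_k B/J) \;\cong\; F_\bullet\otimes_k (B/J).\]
Since $B/J$ is flat over the field $k$, taking homology commutes with $(-)\otimes_k (B/J)$, and hence
\[\Tor_i^R(R/I,R/J) \;\cong\; H_i(F_\bullet)\otimes_k (B/J),\]
which equals $(A/I)\otimes_k (B/J)$ for $i=0$ and vanishes for $i\ge 1$. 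In particular $\Tor_1^R(R/I,R/J)=0$, yielding $I\cap J=IJ$.

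\textbf{Main obstacle.} There is essentially no serious obstacle: every flatness hypothesis invoked is automatic because $k$ is a field, so the argument amounts to careful bookkeeping of the tensor products. In fact the same computation gives the K\"unneth-type vanishing $\Tor_i^R(R/I,R/J)=0$ for all $i\ge 1$, which will likely be useful later in the paper when controlling depth and projective dimension of sums of such ideals. One could alternatively give a short direct proof in the monomial case by observing that any monomial in $IR\cap JR$ separates into an $x$-part lying in $I$ and a $y$-part lying in $J$, hence lies in $IJ$; but the Tor argument above has the merit of covering arbitrary homogeneous ideals with no extra effort.
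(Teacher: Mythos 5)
Your proposal is correct. Note that the paper itself offers no argument for this lemma: it is quoted verbatim from Hoa--Tam \cite[Lemma 1.1]{HT}, so there is no in-paper proof to match against; your write-up is in effect a self-contained replacement for that citation. The argument checks out in all details: the identification $(I\cap J)/IJ\cong\Tor_1^R(R/I,R/J)$ is the standard consequence of tensoring $0\to I\to R\to R/I\to 0$ with $R/J$; since $B$ is free over $k$, the complex $F_\bullet\otimes_k B$ is indeed an $R$-free resolution of $R/I\cong (A/I)\otimes_k B$; and the collapse $(F_\bullet\otimes_k B)\otimes_R(A\otimes_k B/J)\cong F_\bullet\otimes_k(B/J)$ is legitimate because $F_\bullet\otimes_k B\cong F_\bullet\otimes_A R$ as complexes of $R$-modules. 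Exactness of $-\otimes_k(B/J)$ over the field $k$ then gives the full K\"unneth-type vanishing $\Tor_i^R(R/I,R/J)=0$ for $i\ge 1$, of which the lemma is the case $i=1$. Compared with the elementary route (which in the monomial or bihomogeneous setting can be run directly on $k$-bases or monomials, as you remark), your homological version costs nothing extra and buys more: the same computation shows that tensoring $A$-resolutions and $B$-resolutions produces an $R$-resolution of $R/(I+J)$ and underlies the depth formula of Lemma \ref{HoaTam2}, so the two quoted lemmas really come from one argument. The only caveat is cosmetic: you should state explicitly that $I$, $J$ in the lemma denote the extensions $IR$, $JR$, which is the convention the paper fixes at the start of Section 2.
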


\begin{lemma}[\protect{\cite[Lemmas 2.2]{HT}}] \label{HoaTam2}
	$\depth R/IJ = \depth A/I + \depth B/J + 1.$ 
	\end{lemma}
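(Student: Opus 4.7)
The strategy is to leverage Lemma \ref{HoaTam} to set up the Mayer--Vietoris short exact sequence
\begin{equation*}
0 \to R/IJ \to R/I \oplus R/J \to R/(I+J) \to 0,
\end{equation*}
where the first map is the diagonal $r \mapsto (r,r)$ and the second is $(u,v) \mapsto u-v$; exactness uses precisely the identity $I \cap J = IJ$. The proof then reduces to computing the depths of the three flanking terms and invoking the depth lemma twice.

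To compute the depths, I would use the standard fact that if $M$ and $N$ are finitely generated graded modules over $A$ and $B$ respectively, then $M \otimes_k N$ is a graded $R$-module with $\depth_R(M \otimes_k N) = \depth_A M + \depth_B N$. One verifies this by concatenating maximal regular sequences and checking, via $k$-flatness, that $\mathfrak{m}_R$ becomes associated to the resulting quotient. Applied to the isomorphisms $R/I \cong (A/I) \otimes_k B$, $R/J \cong A \otimes_k (B/J)$, and $R/(I+J) \cong (A/I) \otimes_k (B/J)$, and setting $a = \dim A$, $b = \dim B$, $d_1 = \depth A/I$, $d_2 = \depth B/J$, this yields
\begin{equation*}
\depth R/I = d_1 + b, \quad \depth R/J = d_2 + a, \quad \depth R/(I+J) = d_1 + d_2.
\end{equation*}

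The conclusion follows from two applications of the depth lemma. The lower-bound inequality $\depth R/IJ \ge \min\{\depth(R/I \oplus R/J),\ \depth R/(I+J) + 1\}$ gives
\begin{equation*}
\depth R/IJ \ge \min\{d_1+b,\ d_2+a,\ d_1+d_2+1\}.
\end{equation*}
Since $I$ and $J$ are nonzero proper homogeneous ideals in the polynomial domains $A$ and $B$, their heights are at least $1$, so $d_1 \le a-1$ and $d_2 \le b-1$; hence $d_1+d_2+1$ achieves the minimum. Conversely, the depth lemma estimate on the right-hand term reads $d_1+d_2 = \depth R/(I+J) \ge \min\{\depth R/IJ - 1,\ \min(d_1+b,d_2+a)\}$, and the same strict inequalities $d_1+d_2 < \min(d_1+b,d_2+a)$ force $\depth R/IJ - 1 \le d_1+d_2$.

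I do not anticipate any serious obstacle. The only delicate step is the justification of the strict bounds $d_1 < a$ and $d_2 < b$, which depends crucially on the hypotheses that $I$ and $J$ are nonzero proper; without these, $d_1+d_2+1$ would not isolate as the correct minimum in the depth lemma estimates, and the two-sided pinching above would break down.
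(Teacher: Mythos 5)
Your proof is correct. Note that the paper itself does not prove this statement at all: it imports it as Lemma 2.2 of Hoa--Tam \cite{HT}, so there is no internal argument to compare against; what you have done is reconstruct a self-contained proof, and it holds up. The Mayer--Vietoris sequence $0 \to R/(I\cap J) \to R/I \oplus R/J \to R/(I+J) \to 0$ is exact for any pair of ideals, with Lemma \ref{HoaTam} used only to identify the left term with $R/IJ$; the tensor-product depth formula $\depth_R(M\otimes_k N)=\depth_A M+\depth_B N$ is standard and your sketch (concatenate regular sequences, then observe $\mathfrak{m}_R \in \Ass$ of the quotient via $\Hom_A(k,M)\otimes_k\Hom_B(k,N)\hookrightarrow \Hom_R(k,M\otimes_k N)$) is the usual verification; and the two-sided pinch with the depth lemma works precisely because $I,J$ nonzero and proper force $d_1\le a-1$, $d_2\le b-1$, hence $d_1+d_2+1\le\min(d_1+b,\,a+d_2)$ and $d_1+d_2<\min(d_1+b,\,a+d_2)$, which is the delicate point you correctly isolate. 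For comparison, the cited source's route (and the related machinery in \cite{HTT,HNTT}) runs the same exact sequence through the K\"unneth formula for graded local cohomology, $H^i_{\mathfrak{m}_R}(M\otimes_k N)\cong\bigoplus_{p+q=i}H^p_{\mathfrak{m}_A}(M)\otimes_k H^q_{\mathfrak{m}_B}(N)$: the long exact sequence then shows $H^i_{\mathfrak{m}_R}(R/IJ)=0$ for $i\le d_1+d_2$ and $H^{d_1+d_2}_{\mathfrak{m}_R}(R/(I+J))$ injects into $H^{d_1+d_2+1}_{\mathfrak{m}_R}(R/IJ)$, giving the same conclusion and, in addition, information on all local cohomology modules (hence also regularity statements). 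Your depth-lemma squeeze is more elementary and buys exactly the depth equality needed here, at the cost of yielding nothing beyond depth.
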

	
We shall use the above lemmas to prove the following result which yields the additivity of depth functions.

\begin{proposition} \label{additivity}
Let $I \subset A$ and $J \subset B$ be homogeneous ideals as above.
There exists a homogeneous ideal $Q$ in a polynomial ring $S$ such that for all $n > 0$,
$$\depth S/Q^n = \depth A/I^n + \depth B/J^n.$$
Moreover, if $I$ and $J$ are monomial ideals, then $Q$ can be chosen to be a monomial ideal.
\end{proposition}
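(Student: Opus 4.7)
The plan is to apply Lemma~\ref{HoaTam2} to the $n$-th powers of $I$ and $J$ in order to get a depth formula that is off by one from the target, and then remove the extra $+1$ by quotienting by a carefully chosen linear non-zero-divisor. Set $R = A \otimes_k B$ and $K = IJ \subset R$. Since $I^n \subset A$ and $J^n \subset B$ remain homogeneous ideals in disjoint variable sets, Lemma~\ref{HoaTam} applied to $I^n, J^n$ gives $K^n = (IJ)^n = I^n J^n = I^n \cap J^n$, and then Lemma~\ref{HoaTam2} yields
\[
\depth R/K^n = \depth A/I^n + \depth B/J^n + 1 \ge 1 \qquad \text{for every } n \ge 1.
\]

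In particular, the irrelevant maximal ideal $\mm_R$ is never associated to $R/K^n$. By Brodmann's theorem, $\bigcup_{n \ge 1}\Ass R/K^n$ is a finite set of homogeneous primes, none of which equals $\mm_R$. Graded prime avoidance (after enlarging $k$ if necessary, which does not affect depth) therefore produces a linear form $h \in R_1$ lying in none of these associated primes, so that $h$ is a non-zero-divisor on $R/K^n$ for every $n$. Setting $S := R/(h)$, a polynomial ring over $k$ with one fewer variable, and $Q := (K + (h))/(h) \subset S$, one has $Q^n = (K^n + (h))/(h)$, and the exact sequence
\[
0 \to R/K^n \xrightarrow{\,\cdot h\,} R/K^n \to R/(K^n + (h)) \to 0
\]
gives $\depth S/Q^n = \depth R/K^n - 1 = \depth A/I^n + \depth B/J^n$, as required.

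For the monomial case, the linear form $h$ must be replaced by a variable in order to preserve the monomial structure of $Q$. When some variable of $R$ is already outside $\bigcup_{n}\Ass R/K^n$ (which happens as soon as $\depth A/I^n \ge 1$ or $\depth B/J^n \ge 1$ for all $n$), one can simply use that variable. In the remaining cases, the approach is to first enlarge the ambient rings by adjoining new variables and replacing $I$ with $I + (u) \subset A[u]$ (and similarly $J$ with $J + (v) \subset B[v]$), while verifying that this does not alter the depth function. The latter reduces, via the exact sequence
\[
0 \to A[u]/(I + (u))^{n-1} \xrightarrow{\,\cdot u\,} A[u]/(I + (u))^{n} \to A/I^n \to 0
\]
and an inductive application of the Depth Lemma, to showing $\depth A[u]/(I + (u))^n = \depth A/I^n$. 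After this enlargement, an appropriate variable not lying in any associated prime of the new product ideal plays the role of $h$, and $Q$ remains monomial.

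The principal obstacle is the monomial case: establishing the depth-preservation identity $\depth A[u]/(I+(u))^n = \depth A/I^n$ in full generality (including when the depth function is non-monotone) and organizing the new variables so that a single monomial non-zero-divisor exists and works uniformly in $n$. The general (not-necessarily-monomial) case, by contrast, is essentially automatic once one has Lemmas~\ref{HoaTam} and~\ref{HoaTam2} in hand and invokes Brodmann's theorem together with graded prime avoidance.
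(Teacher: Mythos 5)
Your treatment of the general homogeneous case is essentially correct, but the monomial assertion --- which is part of the statement and is exactly what the paper needs for its later applications --- is not established, and the missing idea is the choice of the linear form. The paper uses no prime avoidance at all: it takes $h = x-y$, where $x$ is a variable of $A$ and $y$ is a variable of $B$. Since $(IJ)^n = I^nJ^n = I^n\cap J^n$ by Lemma~\ref{HoaTam}, every associated prime of $R/(IJ)^n$ is extended from $A$ or from $B$; an extended prime $\pp R$ with $\pp\subset A$ cannot contain $x-y$ (writing elements of $R=A\otimes_k B$ in the free $A$-basis of monomials in the $B$-variables, $x-y\in \pp R$ would force $1\in\pp$), and symmetrically for $\qq R$. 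Hence $x-y$ is a non-zero-divisor on $R/(IJ)^n$ for \emph{every} $n$ simultaneously, with no appeal to Brodmann, no enlargement of $k$, and --- decisively --- $S=R/(x-y)$ is the polynomial ring obtained by identifying $y$ with $x$, under which the monomial ideal $IJ$ maps to a monomial ideal. So monomiality is automatic and requires no separate construction.

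Your fallback for the monomial case does not work. First, the parenthetical criterion is wrong: $\depth A/I^n\ge 1$ for all $n$ does not yield a variable outside $\bigcup_n\Ass(R/K^n)$; for instance $I=(x_1x_2)\subset k[x_1,x_2]$ has depth function identically $1$, yet every variable of $A$ lies in an associated prime of every $I^n$. More seriously, the identity $\depth A[u]/(I+(u))^n=\depth A/I^n$, on which your ``remaining cases'' rest, is false precisely where you need it: if $\depth A/I=0$, then $u^{n-1}A[u]\cap (I+(u))^n=u^{n-1}I+(u^n)$, so $A[u]/(I+(u))^n$ contains a submodule isomorphic to $A/I$ annihilated by $u$; consequently the graded maximal ideal of $A[u]$ is an associated prime and $\depth A[u]/(I+(u))^n=0$ for all $n$. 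Taking $I$ as in Example~\ref{Type I} with $d=2$ (depth function $0,1,1,\dots$) gives a concrete counterexample already at $n=2$. So as written the monomial half of the Proposition is unproved; replacing your generic linear form $h$ by $x-y$ repairs everything at once and also simplifies the homogeneous case.
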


\begin{proof}
Let $x \in A$ and $y \in B$ be arbitrary variables.
Let $R = A \otimes_k B$. Then $R$ is a polynomial ring in the variables of $A$ and $B$.
By Lemma \ref{HoaTam} we have $IJ = I \cap J$.
The associated primes of $I \cap J$ are extensions of ideals in one of the rings $A,B$.
Therefore, $x-y$ does not belong to any associated prime of $IJ$.
From this it follows that
$$\depth R/(IJ,x-y) = \depth R/IJ -1.$$
By Lemma \ref{HoaTam2} we have
$$\depth R/IJ = \depth A/I + \depth B/J +1.$$
Therefore,
$$\depth R/(IJ,x-y) = \depth A/I + \depth B/J.$$
Obviously, we may replace $I,J$ by $I^n,J^n$ and obtain
$$\depth R/(I^nJ^n,x-y) = \depth A/I^n + \depth B/J^n.$$
Set $S = R/(x-y)$ and $Q = (IJ,x-y)/(x-y)$. Then $S$ is isomorphic to a polynomial ring over $k$
and
$$\depth S/Q^n = \depth R/((IJ)^n,x-y) = \depth A/I^n + \depth B/J^n$$
for all $n > 0$. Moreover, 
$Q$ is a monomial ideal if $I, J$ are monomial ideals.
\end{proof}

To ease on notations, we shall identify a numerical function $f(n)$ with the sequence of its values $f(1),f(2),....$

If $f$ is the constant function 0,0,0,..., 
then $f$ is the depth function of the maximal homogeneous ideal of any polynomial ring over $k$.

\begin{lemma} \label{Types}
Let $f$ be a convergent numerical function which is not the constant confunction 0,0,0,.... 
Then $f$ can be written as a sum of numerical functions of the following two types:\par
{\rm Type I:} $0,...,0,1,1,...$, \par
{\rm Type II:} $0,...,0,1,0,0,...$. \par
\end{lemma}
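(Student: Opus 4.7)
The plan is to decompose $f$ into its ``eventual value'' contribution and a ``transient'' contribution. Let $c = \lim_{n\to\infty} f(n) \in \ZZ_{\ge 0}$, which exists because $f$ is convergent, and fix $N \in \NN$ such that $f(n) = c$ for all $n \ge N$. First I would take $c$ copies of the Type I function $\mathbf{1}_{n \ge N}$ (that is, the sequence $0,\ldots,0,1,1,\ldots$ with the $1$'s starting at position $N$); call their sum $g$, so that $g(n) = 0$ for $n < N$ and $g(n) = c$ for $n \ge N$.

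Next, consider $h := f - g$. By construction $h(n) = f(n) \ge 0$ for $n < N$ and $h(n) = 0$ for $n \ge N$, so $h$ is a non-negative integer-valued function with finite support. Each position $n < N$ with $h(n) > 0$ then contributes $h(n)$ copies of the Type II function $\mathbf{1}_{\{n\}}$ (the sequence that is $1$ at position $n$ and $0$ elsewhere). Putting this together yields
\[
f \;=\; g + h \;=\; \underbrace{\mathbf{1}_{n \ge N} + \cdots + \mathbf{1}_{n \ge N}}_{c \text{ copies}} \;+\; \sum_{n < N}\, h(n)\cdot \mathbf{1}_{\{n\}},
\]
which is a finite sum of $c$ functions of Type I and $\sum_{n<N} h(n)$ functions of Type II.

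The only point to watch is the case $c = 0$: then $g$ is identically zero and there are no Type I summands, but since $f$ is not the constant zero function, $h = f$ has nonempty finite support and so is a nonempty sum of Type II functions. In every case the decomposition is finite and each summand is exactly of one of the two prescribed forms. There is no real obstacle here—this is a purely combinatorial repackaging of the data of the sequence $f(1), f(2), \ldots$, and it is precisely what is needed to feed Proposition \ref{additivity} inductively in the proof of the main theorem.
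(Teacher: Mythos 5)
Your proof is correct and follows essentially the same route as the paper: split $f$ into the eventual constant part (a multiple of a Type I step function starting where $f$ stabilizes) plus the finitely supported transient part, which is a nonnegative integer combination of Type II spike functions. The only difference is cosmetic—you make the edge case $c=0$ explicit, which the paper leaves implicit.
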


\begin{proof}
Let $f$ be a convergent numerical function of the form $c_1,...,c_n, c,c,...$. 
Then $f$ is the sum of the functions $0,...,0,c_i,0,0,...$, $i = 1,..,n$, and the functions $0,...,0,c,c,...$. 
The function $0,...,0,c_i,0,0,...$ is $c_i$ times the function $0,...,0,1,0,0,...$, where $1$ stands only at the $i$-th place. The function $0,...,0,c,c,...$ is $c$ times the function $0,...,0,1,1,...$, where $1$ starts from the $(n+1)$-th place.
\end{proof}

By Proposition \ref{additivity} and Lemma \ref{Types}, to establish the validity of Conjecture \ref{conj.HH}, it suffices to construct depth functions of types I and II.

\section{Construction of ideals with depth functions of Types I and II}

Herzog and Hibi \cite{HH} already constructed monomial ideals $I$ whose depth functions can be any non-decreasing convergent numerical function. Therefore, the existence of depth functions of Type I follows from their result.

\begin{example}[\protect{\cite[Theorem 4.1]{HH}}] \label{Type I}
Let $A = k[x,y,z]$. For any integer $d \ge 2$, let 
$I=(x^{d+2},x^{d+1}y,xy^{d+1},y^{d+2},x^dy^2z).$
Then
$$\depth A/I^n = \begin{cases} 0 &\text{if $n \le d-1$},\\ 1 &\text{if $n \ge d$}. \end{cases}$$
\end{example}

We also know that there are monomial ideals $J$ with the depth function $1,...,1,0,0,...$ \cite{HTT, MST}.
The existence of such depth functions can be used to construct depth functions of Type II as follows.

Let $I$ and $J$ be monomial ideals with the depth functions $0,...,0,1,1,...$ and $1,...,1,0,0,...$, where the first 1 of the first function and the last 1 of the second function are on the same place.  
By the proof of Proposition \ref{additivity}, the function
$\depth R/((IJ)^n,x-y)$ is a function of the form $1,...,1,2,1,1,...$ for some variables $x,y$.
If we can find variables $x',y'$ such that $x'-y'$ is a non-zerodivisor in $R/((IJ)^n,x-y)$ for all $n \ge1$,
then 
$$\depth R/((IJ)^n,x-y,x'-y') = \depth R/((IJ)^n,x-y)-1$$ 
is a function of the form $0,...,0,1,0,0,....$
Clearly, we can identify $S = R/(x-y,x'-y')$ with a polynomial ring
and $(IJ,x-y,x'-y')/(x-y,x'-y')$ with a monomial ideal in $S$.
\par

To find such variables $x',y'$ we need to know the associated primes of the ideal $((IJ)^n,x-y)$ for all $n \ge 1$.
For convenience, we denote the set of the associated primes and the set of the minimal associated primes of an ideal $Q$ by $\Ass(Q)$ and $\Min(Q)$, respectively.

\begin{proposition} \label{control}
Let $A$ and $B$ be polynomial rings over a field $k$.
Let $I \subset A$ and $J \subset B$ be nonzero proper homogeneous ideals.
Let $x \in A$ and $y \in B$ be arbitrary variables. Let $R = A\otimes_kB$.
Then
\begin{align*}
& \Ass (IJ,x-y)  = \\
& \{(\pp,x-y)|\ \pp \in \Ass (I)\} \cup \{(\qq,x-y)|\ \qq \in \Ass(J)\} \cup \bigcup_{\begin{subarray}{l} \pp \in \Ass (I), x \in \pp\\ \qq \in \Ass (J), y \in \qq \end{subarray}} \Min(\pp+\qq).
\end{align*}
\end{proposition}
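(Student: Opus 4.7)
The plan is to analyze $\Ass(R/(IJ,x-y))$ via a Mayer--Vietoris/Tor argument. Since $IJ=I\cap J$ by Lemma \ref{HoaTam} and $x-y$ is a non-zero-divisor on $R/I$, $R/J$, and $R/(I\cap J)$ (their associated primes come from $A$ or $B$ alone, as in the proof of Proposition \ref{additivity}), tensoring $0\to R/(I\cap J)\to R/I\oplus R/J\to R/(I+J)\to 0$ with $R/(x-y)$ yields the four-term exact sequence
$$0\to T\to R/(IJ,x-y)\to R/(I,x-y)\oplus R/(J,x-y)\to R/(I+J,x-y)\to 0,$$
where $T=\Tor_1^R(R/(I+J),R/(x-y))=((I+J):(x-y))/(I+J)$. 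Hence $\Ass(T)\subseteq\Ass(R/(IJ,x-y))\subseteq\Ass(T)\cup\Ass(R/(I,x-y))\cup\Ass(R/(J,x-y))$.

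Next I would identify the three sets with those appearing in the proposition. For $\Ass(R/(I,x-y))$: starting from the primary decomposition $I=\bigcap_\pp Q_\pp$ in $A$ and the identification $R/(x-y)\cong A[y_1,\dots,y_q]$ (via $y\mapsto x$), one checks that $(I,x-y)=\bigcap_\pp(Q_\pp,x-y)$ and that each $(Q_\pp,x-y)$ is $(\pp,x-y)$-primary, since $R/(\pp,x-y)\cong(A/\pp)[y_1,\dots,y_q]$ is a domain; this yields $\Ass(R/(I,x-y))=\{(\pp,x-y):\pp\in\Ass(I)\}$, and analogously for $J$. For $\Ass(T)$: since $T\subseteq R/(I+J)$ is the annihilator of $x-y$, we have $\Ass(T)=\{P\in\Ass(R/(I+J)):x-y\in P\}$, and by a standard formula $\Ass(R/(I+J))=\bigcup_{\pp,\qq}\Min(\pp+\qq)$ with $\pp\in\Ass(I),\qq\in\Ass(J)$.

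The crux of the proof, and the main obstacle, is the claim: for $P\in\Min(\pp+\qq)$, $x-y\in P$ if and only if $x\in\pp$ and $y\in\qq$. The ``if'' direction is immediate. For the ``only if'' direction, homogeneity is essential. If $x\notin\pp$, then $\bar x\in(A/\pp)_1$ is nonzero; since $(A/\pp)_0=k$ and graded components are disjoint in the graded domain $A/\pp$, any relation $\sum c_i\bar x^i=0$ with $c_i\in k$ forces all $c_i=0$, so $\bar x$ is transcendental over $k$. I would then invoke the bi-grading on $(A/\pp)\otimes_k(B/\qq)$: if $(\bar x-\bar y)f=0$ and $f=\sum f_{m,n}$ is the bi-homogeneous decomposition, then comparing bi-degree $(m+1,0)$ components gives $\bar x f_{m,0}=0$; since $(A/\pp)\otimes_k(B/\qq)$ is a free $(A/\pp)$-module (via a $k$-basis of $B/\qq$) and $\bar x$ is a non-zero-divisor on $A/\pp$, this forces $f_{m,0}=0$, and inductively $f=0$. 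Symmetry handles the case $y\notin\qq$, so $\bar x-\bar y$ is a non-zero-divisor on $R/(\pp+\qq)$ unless both $x\in\pp$ and $y\in\qq$.

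Finally, to upgrade the Tor inclusion to equality, I would verify that every prime in the first set lies in $\Ass(R/(IJ,x-y))$ (the second set being symmetric). For $P=(\pp,x-y)$: if $J\not\subseteq P$, choose $g\in J\setminus P$; then $(IJ,x-y)_P=(I,x-y)_P$ and $P\in\Ass(R/(IJ,x-y))$. If $J\subseteq P$, the transcendence of $\bar x$ when $x\notin\pp$ would force the injection $B\hookrightarrow(A/\pp)[y_1,\dots,y_q]$, contradicting $0\ne J\subseteq\ker$; hence $x\in\pp$, so $P=(\pp,y)$ and $J\subseteq(y)$. A codimension argument (that the hypersurface $V(y)$ is a component of $V(J)$) then gives $(y)\in\Ass(J)$, whence $P=\pp+(y)\in\Min(\pp+(y))$ already belongs to the third set and thus to $\Ass(T)\subseteq\Ass(R/(IJ,x-y))$.
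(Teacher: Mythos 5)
Your proof is correct, but it takes a genuinely different route from the paper's, even though both rest on the Mayer--Vietoris sequence $0\to R/(I\cap J)\to R/I\oplus R/J\to R/(I+J)\to 0$ and on the formula $\Ass(I+J)=\bigcup\Min(\pp+\qq)$ (your ``standard formula'' is precisely \cite[Theorem 2.5]{HNTT}, the same external input the paper cites). The paper localizes at a candidate prime $P$ outside the first two sets and counts depth: $P\in\Ass(IJ,x-y)$ iff $\depth(R/IJ)_P=1$ iff $\depth(R/(I+J))_P=0$, using $\depth(R/I)_P,\ \depth(R/J)_P\ge 2$, and then it pins down the conditions $x\in\pp$, $y\in\qq$ by citing \cite[Lemma 2.4]{HNTT} (bihomogeneity of $P$ and $P\cap A=\pp$, $P\cap B=\qq$). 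You instead tensor the sequence with $R/(x-y)$, obtain the global four-term sequence with $T=\Tor_1^R(R/(I+J),R/(x-y))$, and sandwich $\Ass(R/(IJ,x-y))$ between $\Ass(T)$ and $\Ass(T)\cup\Ass(R/(I,x-y))\cup\Ass(R/(J,x-y))$; your bigraded non-zerodivisor computation replaces the citation of \cite[Lemma 2.4]{HNTT} by a self-contained argument and correctly identifies homogeneity as the essential hypothesis, in line with the paper's non-homogeneous counterexample. The degenerate case $J\subseteq P=(\pp,x-y)$ is also handled differently: the paper writes $J=y^dJ'$ and exhibits $P=(y^dI,x-y):y^df$ directly, while you show $x\in\pp$, $J\subseteq(y)$, hence $(y)\in\Min(J)\subseteq\Ass(J)$ and $P=\pp+(y)$ already lies in the third set, so in $\Ass(T)$ --- a clean alternative to the colon computation. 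Two steps deserve slightly more justification than you give, though both are standard and easily filled: primariness of $(Q_\pp,x-y)$ requires the fact that a primary ideal stays primary under the polynomial extension $R/(x-y)\cong A[\text{remaining }B\text{-variables}]$ (domain-ness of $R/(\pp,x-y)$ alone only gives that the radical is prime), and the implication $J\subseteq P$, $x\in\pp\Rightarrow J\subseteq(y)$ uses $P\cap B=(y)$, which follows because $B/(y)$ injects into $R/(\pp,y)\cong(A/\pp)\otimes_k B/(y)$. Overall, the paper's localization-and-depth argument is shorter given the HNTT lemmas; yours is more self-contained on the bigraded point at the cost of a bit more bookkeeping with the Tor term.
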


\begin{proof}
Let $P$ be an arbitrary prime of $\Ass(I,x-y)$. Then $P = \pp+(x-y)$ for some $\pp \in \Ass(I)$.
If $J \subseteq P$, we must have $J \subseteq (y) \subset P$.
This implies $J = y^dJ'$ for some ideal $J' \subset B$, $J' \not\subseteq (y)$, $d \ge 1$.
Let $f \in A$ be an element such that $P = (I,x-y):f$.
It is easy to check that $P = (y^dI,x-y) : y^df.$
Hence, $P \in \Ass(y^dI,x-y)$.
Since $(IJ,x-y)_P = (y^dI,x-y)_P$,  this implies $P \in \Ass(IJ,x-y)$.
If $J \not\subseteq P$, we have $(IJ,x-y)_P = (I,x-y)_P$.
Hence, $P \in \Ass(IJ,x-y)$.
So we can conclude that
$$\Ass(I,x-y) = \{(\pp,x-y)|\ \pp \in \Ass (I)\} \subseteq \Ass(IJ,x-y).$$
\par

Similarly, we also have
$$\Ass(J,x-y) = \{(\qq,x-y)|\ \qq \in \Ass(J)\} \subseteq \Ass(IJ,x-y).$$
It remains to prove that if $P$ is a prime ideal of $R$, which does not belong to $\Ass(I,x-y)$ nor $\Ass(J,x-y)$,
then $P \in \Ass(IJ,x-y)$ if and only if $P \in \Min(\pp+\qq)$ for some $\pp \in \Ass (I), x \in \pp$, and $\qq \in \Ass (J), y \in \qq $. \par

Without restriction, we may assume that $(IJ,x-y) \subseteq P$.
Since $P \not\in \Ass(I,x-y)$, we have $\depth(R/(I,x-y))_P \ge 1$.
Since $x-y$ is a non-zerodivisor on $I$, this implies $\depth (R/I)_P \ge 2$.
Similarly, we also have $\depth (R/J)_P \ge 2$.
Note that $P \in \Ass(IJ,x-y)$ if and only if $\depth (R/(IJ,x-y))_P = 0$.
By Lemma \ref{HoaTam} we have $IJ = I \cap J$. Hence, $x-y$ is a non-zerodivisor in $R/IJ$.
From  this it follows that $P \in \Ass(IJ,x-y)$ if and only if $\depth (R/IJ)_P = 1$.
Using the exact sequence
$$0 \to (R/IJ)_P  \to (R/I)_P \oplus (R/J)_P \to (R/I+J)_P \to 0$$
we can deduce that $\depth (R/IJ)_P = 1$ if and only if $\depth (R/I+J)_P = 0$, which means $P \in \Ass(I+J)$.
By \cite[Theorem 2.5]{HNTT}, we have
$$\Ass(I+J) = \bigcup_{\begin{subarray}{l} \pp \in \Ass (I)\\ \qq \in \Ass (J)\end{subarray}} \Min(\pp+\qq).$$
Notice that $\pp +\qq$ is not necessarily a prime ideal (see e.g. \cite[Example 2.3]{HNTT}).

If $P \in \Min(\pp+\qq)$, then $P \cap A = \pp$ and $P \cap B = \qq$ by \cite[Lemma 2.4]{HNTT}. Moreover,
$P$ is a bihomogeneous ideal with respect to the natural bigraded structure of $R = A \otimes_k B$.
In this case, $x-y \in P$ implies $x \in P \cap A = \pp$ and $y \in P \cap B = \qq$.
So we can conclude that $P \in \Ass(IJ,x-y)$ if  and only if $P \in \Min(\pp+\qq)$ for some $\pp \in \Ass (I), x \in \pp$, and $\qq \in \Ass (J), y \in \qq $.
\end{proof}

\begin{remark}
{\rm Since Theorem \ref{control} is of independent interest, 
one may ask whether it is true in a more general setting. 
If $I,J$ are not homogeneous, we can use the same arguments to prove the following general formula:
\begin{align*}
\Ass (IJ,x-y)  = & \{(\pp,x-y)|\ \pp \in \Ass (I)\} \cup \{(\qq,x-y)|\ \qq \in \Ass(J)\} \cup\\
& \bigcup_{\begin{subarray}{l} \pp \in \Ass (I)\\ \qq \in \Ass (J)\end{subarray}} \Min(\pp+\qq) \cap V(x-y),
\end{align*}
where $V(x-y)$ denotes the set of prime ideals containing $x-y$. In this case,
$(IJ,x-y)$ may have an associated prime $P \in \Min(\pp+\qq)$ for some $\pp \in \Ass(I)$ and $\qq \in \Ass(J)$ which do not satisfy the conditions $x \not\in \pp$ and $y \not\in \qq$. 
}
\end{remark}

\begin{example}
{\rm Let $A = \QQ[x,z]$ and $I = (x^2+1,z)$. Let $B = \QQ[y,t]$ and $J = (y^2+1,t)$.
Then $I, J$ are prime ideals, $x \not\in I$ and $y \not\in J$. We have
$$\Min_R(R/I+J) = \{(x^2+1,t,z,x-y),(x^2+1,t,z,x+y)\}.$$
Hence
$$\Ass(IJ,x-y) = \{(x^2+1,z,x-y), (y^2+1,t,x-y), (x^2+1,z,t,x-y)\}.$$
These primes do not contain $x$ and $y$.}
\end{example}

Using Proposition \ref{control} we can give a sufficient condition for the existence of variables $x',y'$ such that $x'-y'$ is a non-zerodivisor in $R/((IJ)^n,x-y)$ for all $n \ge 1$.  

\begin{proposition} \label{reduction}
Let $I$ be a proper monomial ideal in $A = k[x_1,...,x_r]$, $r \ge 3$, such that $x_3,...,x_r \in \sqrt{I}$.
Let $J$ be a proper monomial ideal in $B=k[y_1,...,y_s]$, $s \ge 3$, such that $y_3,...,y_s \in \sqrt{J}$.
Let $R = k[x_1,...,x_r,y_1,...,y_s]$.
Assume that $\depth A/I^n > 0$ or $\depth B/J^n > 0$ for some $n > 0$. Then
$$\depth R/((IJ)^n,x_1-y_1,x_2-y_2) = \depth A/I^n + \depth B/J^n - 1.$$
\end{proposition}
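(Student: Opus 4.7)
The plan is to reduce to Proposition~\ref{additivity} by showing that $x_2-y_2$ is a non-zerodivisor modulo $((IJ)^n, x_1-y_1)$ in $R$. Granting this, the formula follows from
\[
\depth R/((IJ)^n,x_1-y_1,x_2-y_2) = \depth R/((IJ)^n,x_1-y_1) - 1 = \depth A/I^n + \depth B/J^n - 1,
\]
where the first equality uses the non-zerodivisor property and the second is exactly the equality $\depth R/(I^nJ^n, x_1-y_1) = \depth A/I^n + \depth B/J^n$ established in the proof of Proposition~\ref{additivity} (noting $(IJ)^n = I^n J^n$).

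To verify the non-zerodivisor property, I would check that $x_2-y_2$ avoids every $P \in \Ass((IJ)^n,x_1-y_1)$. By Proposition~\ref{control} applied to $I^n$ and $J^n$, such $P$ is of one of three forms: (a) $(\pp, x_1-y_1)$ with $\pp \in \Ass(I^n)$; (b) $(\qq, x_1-y_1)$ with $\qq \in \Ass(J^n)$; or (c) $P \in \Min(\pp+\qq)$ with $\pp \in \Ass(I^n)$, $\qq \in \Ass(J^n)$, $x_1 \in \pp$, $y_1 \in \qq$. In cases (a) and (b), $\pp$ (resp.\ $\qq$) is a monomial prime generated by a subset of variables, so $R/P$ is a polynomial ring over $k$ in which $y_2$ (resp.\ $x_2$) survives as a distinct variable; thus the image of $x_2-y_2$ in $R/P$ is a nonzero linear form, and $x_2-y_2 \notin P$.

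Case (c) is the main obstacle, and it is here that the depth hypothesis is crucial. Since $I$ is monomial, every $\pp \in \Ass(I^n)$ is a monomial prime containing $\sqrt{I} \supseteq (x_3,\ldots,x_r)$; combined with $x_1 \in \pp$, this forces $\pp \in \{(x_1,x_3,\ldots,x_r),\mm_A\}$, and symmetrically $\qq \in \{(y_1,y_3,\ldots,y_s),\mm_B\}$. Without loss of generality I assume $\depth A/I^n > 0$, so $\mm_A \notin \Ass(I^n)$; this forces $\pp = (x_1,x_3,\ldots,x_r)$, and in particular $x_2 \notin \pp$. Because $\pp$ and $\qq$ live in disjoint variable sets, $\pp+\qq$ is itself a monomial prime of $R$, so $\Min(\pp+\qq) = \{\pp+\qq\}$ and $P = \pp+\qq$; as $x_2 \notin P$ and $P$ is prime, $x_2-y_2 \notin P$. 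Without the depth hypothesis, the degenerate case $\pp = \mm_A$, $\qq = \mm_B$ would give $P = \mm_A + \mm_B$, the maximal ideal of $R$, which does contain $x_2-y_2$, so the hypothesis is essential.
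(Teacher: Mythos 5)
Your proof is correct and follows essentially the same route as the paper: reduce to showing that $x_2-y_2$ is a non-zerodivisor on $R/((IJ)^n,x_1-y_1)$, whose depth is $\depth A/I^n+\depth B/J^n$ by the proof of Proposition \ref{additivity}, and then use Proposition \ref{control} to exclude $x_2-y_2$ from every associated prime. The only cosmetic differences are that you spell out cases (a) and (b), which the paper leaves implicit, and in case (c) you pin down $\pp=(x_1,x_3,\dots,x_r)$ directly from $\mm_A\notin\Ass(I^n)$ rather than deriving $P=(x_1,\dots,x_r,y_1,\dots,y_s)$ and contradicting $\depth R/((IJ)^n,x_1-y_1)\ge 1$ as the paper does; just note that your final step ``$x_2\notin P$ and $P$ prime imply $x_2-y_2\notin P$'' really rests on $P$ being generated by variables (so a linear form lies in $P$ only if each of its variables does), not on primality alone.
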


\begin{proof}
By the proof of Proposition \ref{additivity} we have
$$\depth R/((IJ)^n,x_1-y_1) = \depth A/I^n + \depth B/J^n \ge 1.$$
It remains to show that $x_2-y_2$ is a non-zerodivisor in $R/((IJ)^n,x_1-y_1)$.
Assume for the contrary that $x_2-y_2 \in P$ for some associated prime $P$ of $((IJ)^n,x_1-y_1)$. By Proposition \ref{control}, $P = \pp+\qq$ for some $\pp \in \Ass(I^n)$, $x_1 \in \pp$, and $\qq \in \Ass(J^n)$, $y_1 \in \qq$.
Note that $\pp$ and $\qq$ are generated by variables in $A$ and $B$.
Since $x_2-y_2 \in \pp+\qq$, we must have $x_2 \in \pp$ and $y_2 \in \qq$.
The assumption $x_3,...,x_r \in \sqrt{I}$ and $y_3,...,y_s \in \sqrt{J}$ implies $x_3,...,x_r \in \pp$ and $y_3,...,y_s \in \qq$.
Hence, $x_1,...,x_r,y_1,...,y_s \in P$. Therefore, $P = (x_1,...,x_r,y_1,...,y_s)$, which contradicts the fact that
$\depth R/((IJ)^n,x_1-y_1) \ge 1$.
\end{proof}

Now we are going to construct monomial ideals having depth function of Type II.

\begin{example} \label{Type II}
{\rm Let $A  = k[x,y,z]$ and $I=(x^{d+2},x^{d+1}y,xy^{d+1},y^{d+2},x^dy^2z)$, $d \ge 2$.
By Example \ref{Type I} we have
$$\depth A/I^n = \begin{cases} 0 &\text{if $n \le d-1$},\\ 1 &\text{if $n \ge d$}. \end{cases}$$
Let $B = k[t,u,v]$. Let $J$ be the integral closure of the ideal
$(t^{3d+3},tu^{3d+1}v,u^{3d+2}v)^3$ or $J = (t^{d+1},tu^{d-1}v,u^dv)$.
By \cite[Example 4.10]{HTT} and \cite[Proposition 1.5]{MST} we have
$$\depth B/J^n=\begin{cases} 1 & \text{if $n\le d$},\\ 0 &\text{if $n\ge d+1$}. \end{cases}$$
Let $R = k[x,y,z,t,u,v]$. 
By Proposition \ref{reduction}, we have
$$\depth R/((IJ)^n,y-u,z-v) = \begin{cases} 0 &\text{if $n\neq d$},\\ 1 &\text{if $n = d$}. \end{cases}$$
If we set $S = k[x,t,u,v]$ and $Q = (x^{d+2},x^{d+1}u,xu^{d+1},u^{d+2},x^du^2v)J$, which is obtained from $IJ$ by setting $y = u$ and $z = v$, then
$$\depth S/Q^n = \depth R/((IJ)^n,y-u,z-v).$$
Hence, the depth function of $Q$ is of Type II.}
\end{example}

\section{Consequences}

By Examples \ref{Type I} and \ref{Type II} we have monomial ideals with depth functions of Types I and II.
Therefore, the solution to Conjecture \ref{conj.HH} immediately follows from Proposition \ref{additivity} and Lemma \ref{Types}. 

\begin{theorem} \label{Herzog-Hibi}
Let $f$ be any convergent numerical function.
There exists a monomial ideal $Q$ in a polynomial ring $S$ such that $\depth S/Q^n$ $= f(n)$ for all $n \ge 1$.
\end{theorem}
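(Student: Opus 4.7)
The plan is to combine all the machinery developed in the preceding sections: the decomposition Lemma \ref{Types}, the building blocks from Examples \ref{Type I} and \ref{Type II}, and the additivity Proposition \ref{additivity}. There is essentially no new content to invent; the work has already been done, and the theorem is an assembly.

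First I would dispose of the trivial case. If $f$ is the constant zero function $0,0,0,\ldots$, then, as already noted in Section 2, $f$ is the depth function of the maximal homogeneous ideal of any polynomial ring over $k$, which is a monomial ideal. So assume $f$ is not identically zero. By Lemma \ref{Types}, there exist finitely many numerical functions $f_1,\ldots,f_m$, each of Type I or Type II, such that
\[
f = f_1 + f_2 + \cdots + f_m.
\]
By Example \ref{Type I}, for each $f_i$ of Type I there exists a monomial ideal $Q_i$ in some polynomial ring $S_i$ over $k$ with $\depth S_i/Q_i^n = f_i(n)$ for all $n\ge 1$. By Example \ref{Type II}, the same holds when $f_i$ is of Type II. Thus each summand $f_i$ is realizable as the depth function of powers of a monomial ideal.

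Now I would iterate Proposition \ref{additivity} to combine these realizations. Given monomial ideals $Q_1,Q_2$ in polynomial rings $S_1,S_2$ (over $k$) with disjoint variable sets, Proposition \ref{additivity} produces a monomial ideal $Q_{12}$ in a polynomial ring $S_{12}$ whose depth function is $f_1+f_2$. Repeating this construction $m-1$ times by pairing with each successive $Q_i$ (renaming variables as needed to keep them disjoint from the variables used so far), one obtains a monomial ideal $Q$ in a polynomial ring $S$ such that
\[
\depth S/Q^n \;=\; \sum_{i=1}^{m} \depth S_i/Q_i^n \;=\; \sum_{i=1}^{m} f_i(n) \;=\; f(n)
\]
for all $n\ge 1$, which is exactly what we want.

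There is no real obstacle left: the genuine difficulties (additivity of depth functions, and the explicit construction of Type I and Type II examples from monomial ideals) have already been settled in Sections 2 and 3. The only points to double-check are bookkeeping ones, namely that Proposition \ref{additivity} does preserve the monomial property (it does, as stated in its final sentence), and that iterating it a finite number of times is legitimate, which is automatic since Lemma \ref{Types} writes $f$ as a \emph{finite} sum.
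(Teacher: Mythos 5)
Your proposal is correct and follows exactly the paper's own argument: decompose $f$ via Lemma \ref{Types} (with the zero function handled by the maximal homogeneous ideal), realize the Type I and Type II summands by the monomial ideals of Examples \ref{Type I} and \ref{Type II}, and iterate Proposition \ref{additivity}, which preserves monomiality. The only difference is that you spell out the finite iteration and the trivial case explicitly, which the paper leaves implicit.
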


Theorem \ref{Herzog-Hibi} has the following interesting consequence on the associated primes of powers of ideals, which 
gives a positive answer to Question \ref{question.R} of Ratliff.

\begin{corollary} \label{Ratliff}
Let $\Gamma$ be a set of positive integers which is either finite or contains all sufficiently large integers.
Then there exists a monomial ideal $Q$ in a polynomial ring $S$ such that $\mm \in \Ass(Q^n)$ if and only if $n \in \Gamma$, where $\mm$ is the maximal homogeneous ideal of $S$.
\end{corollary}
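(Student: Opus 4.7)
The plan is to reduce Corollary \ref{Ratliff} to Theorem \ref{Herzog-Hibi} via the standard correspondence between the maximal homogeneous ideal being associated to $S/Q^n$ and $\depth S/Q^n$ being zero. Concretely, for the maximal homogeneous ideal $\mm$ of a standard graded polynomial ring $S$ over $k$ and any proper homogeneous ideal $Q \subseteq S$, one has $\mm \in \Ass(S/Q^n)$ if and only if $\depth S/Q^n = 0$. Indeed, all associated primes of the homogeneous ideal $Q^n$ are themselves homogeneous; so $\depth S/Q^n = 0$ forces $\mm \subseteq \bigcup_{P \in \Ass(S/Q^n)} P$, and graded prime avoidance then yields $\mm \in \Ass(S/Q^n)$, while the converse is immediate from the definition of depth.

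Next, I would encode the target set $\Gamma$ as the zero locus of a convergent numerical function. Define $f \colon \NN \to \ZZ_{\ge 0}$ by $f(n) = 0$ if $n \in \Gamma$ and $f(n) = 1$ if $n \notin \Gamma$. By the hypothesis on $\Gamma$, the sequence $f(1), f(2), \ldots$ eventually stabilizes: if $\Gamma$ is finite, then $f(n) = 1$ for all $n \gg 0$, and if $\Gamma$ contains all sufficiently large integers, then $f(n) = 0$ for all $n \gg 0$. In either case $f$ is a convergent numerical function in the sense of the paper.

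Finally, I apply Theorem \ref{Herzog-Hibi} to obtain a monomial ideal $Q$ in some polynomial ring $S$ realizing $\depth S/Q^n = f(n)$ for every $n \ge 1$. By the opening observation we then have
$$\mm \in \Ass(Q^n) \iff \depth S/Q^n = 0 \iff f(n) = 0 \iff n \in \Gamma,$$
which is the desired equivalence. Essentially the only substantive point is the graded depth-versus-associated-prime dictionary; after that the corollary is a direct consequence of Theorem \ref{Herzog-Hibi}, so I do not expect any serious obstacle.
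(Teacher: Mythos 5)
Your proposal is correct and follows essentially the same route as the paper: encode $\Gamma$ as the zero set of a convergent numerical function, realize that function as a depth function via Theorem \ref{Herzog-Hibi}, and use the equivalence $\mm \in \Ass(Q^n) \iff \depth S/Q^n = 0$. The only difference is that you spell out the graded prime-avoidance justification of that equivalence, which the paper takes as standard.
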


\begin{proof}
Let $f$ be any convergent numerical function such that $f(n) = 0$ if and only if $n \in \Gamma$.
Then there exists a monomial ideal $Q$ in a polynomial ring $S$ such that $\depth S/Q^n = f(n)$ for all $n \ge 1$.
This is the desired ideal because $\mm \in \Ass(Q^n)$ if and only if $\depth S/Q^n = 0$.
\end{proof}

Corollary \ref{Ratliff} also gives a negative answer to the following question of Ratliff \cite[(8.4)]{Ra}.

\begin{question}[Ratliff] \label{Ratliff 2}
Let $Q$ be an arbitrary ideal in $Q$ in a Noetherian ring $S$. Let $P \supset Q$ be a prime ideal such that $P \in \Ass(Q^m)$ for some $m \ge 1$ and $P \in \Ass(Q^n)$ for all $n$ sufficiently large. Is $P \in \Ass(Q^n)$ for all $n \ge m$?
\end{question}

This question was already answered in the negative by Huckaba \cite[Example 1.1]{Hu}. However, the ideal $Q$ in his example is not a monomial ideal as in the proof of Corollary \ref{Ratliff}. 

One may also ask about the possible function of the projective dimension of powers of a homogeneous ideal.
Let $Q$ be an arbitrary homogeneous ideal in a polynomial ring $S$.
By the Auslander-Buchsbaum formula we have
$$\pd Q^n = \dim S - \depth S/Q^n - 1.$$
Since $\depth S/Q^n$ is a convergent numerical function \cite{Br}, $\pd Q^n$ is also a convergent numerical function.

\begin{corollary} \label{pd}
Let $g$ be an arbitrary convergent numerical function.
There exist a homogeneous ideal $Q$ and a number $c$ such that $\pd Q^n = g(n) + c$ for all $n \ge 1$.
\end{corollary}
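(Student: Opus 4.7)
The plan is to reduce the statement about projective dimensions to the main theorem via the Auslander-Buchsbaum formula recalled just before the corollary. Given a convergent numerical function $g$, I would flip it into a convergent numerical function $f$ whose values are non-negative, apply Theorem \ref{Herzog-Hibi} to realize $f$ as a depth function, and read off the projective dimension using Auslander-Buchsbaum.

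More concretely, first I would note that since $g : \NN \to \ZZ_{\ge 0}$ is convergent, it is eventually constant, hence bounded; pick an integer $M$ with $M \ge g(n)$ for all $n \ge 1$. Define $f(n) := M - g(n)$. Then $f$ is a convergent numerical function taking values in $\ZZ_{\ge 0}$, so Theorem \ref{Herzog-Hibi} produces a monomial ideal $Q$ in some polynomial ring $S$ with $\depth S/Q^n = f(n) = M - g(n)$ for every $n \ge 1$.

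Finally, applying the Auslander-Buchsbaum formula $\pd Q^n = \dim S - \depth S/Q^n - 1$ gives
\[
\pd Q^n = \dim S - (M - g(n)) - 1 = g(n) + (\dim S - M - 1)
\]
for all $n \ge 1$, so setting $c := \dim S - M - 1$ completes the argument. There is essentially no obstacle here beyond verifying that $f$ is a legal input to Theorem \ref{Herzog-Hibi}; the only minor point to be careful about is that $c$ is allowed to be any integer (possibly negative), which matches the formulation of the corollary.
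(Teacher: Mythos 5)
Your proposal is correct and follows essentially the same route as the paper: the paper also sets $f(n) = m - g(n)$ with $m = \max_{n\ge 1} g(n)$, applies Theorem \ref{Herzog-Hibi}, and uses the Auslander--Buchsbaum formula to get $c = \dim S - m - 1$. Your only (harmless) variation is allowing any upper bound $M$ for $g$ instead of the exact maximum.
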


\begin{proof}
Let $m = \max_{n \ge 1}g(n)$. Then $f(n) = m - g(n)$, $n \ge 1$, is a convergent numerical function.
By Theorem \ref{Herzog-Hibi}, there exists a
homogeneous ideal $Q$ in a polynomial ring $S$ such that $\depth S/Q^n = f(n)$ for all $n \ge 1$.
Let $d$ be the number of variables of $S$. Set $c = d-m-1$.
Then 
$$\pd Q^n = d - f(n) - 1= d - m + g(n) - 1 = g(n) + c$$
for all $n \ge 1$. 
\end{proof}

It is of interest to know the smallest possible number $c$ for a given function $g$ in Corollary \ref{pd}.
This number is determined by the smallest number of variables of a polynomial ring  which contains a homogeneous ideal with a given depth function $f$. We are not able to compute this number. 
The proof of Theorem \ref{Herzog-Hibi} uses a high number of variables compared to the values of $f$. 
\medskip

\begin{acknowledgement}
H.T. H\`a is partially supported by the Simons Foundation (grant \#279786) and Louisiana Board of Regents (grant \#LEQSF(2017-19)-ENH-TR-25). Hop D. Nguyen and T.N. Trung are partially supported by Project ICRTM 01$\_$2019.01 of the International Centre for Research and Postgraduate Training in Mathematics. 
N.V. Trung is partially supported by Vietnam National Foundation for Science and Technology Development.
The authors would like to thank Aldo Conca and J\"urgen Herzog for useful discussions, Takayuki Hibi for pointing out a gap of the proof of Conjecture \ref{conj.HH} for non-increasing functions in \cite{HTT}, and C\u{a}t\u{a}lin Ciuperc\u{a} for informing that our negative answer to  Question \ref{Ratliff 2} of Ratliff was already given by S. Huckaba in \cite{Hu}.
This paper is split from the first version of \cite{HNTT} following a recommendation of its referee.
\end{acknowledgement}


\end{document}